


\documentclass[12pt,reqno]{amsart}

\usepackage[dvips]{graphicx} 
\usepackage[arrow,matrix,curve]{xy}

\usepackage{amssymb,latexsym, amsmath, amscd, array, hyperref}

\usepackage{breakurl}  

\usepackage{tikz}     

\topmargin=0.1in

                            {\egroup\par\bigskip}

\def\tag#1#2{\hbox to\textwidth{#1\hfil$\displaystyle #2$\hfil}}

\newtheorem{theorem}{Theorem}[section]
\newtheorem{lemma}[theorem]{Lemma}
\newtheorem{proposition}[theorem]{Proposition}
\newtheorem{corollary}[theorem]{Corollary}

\theoremstyle{definition}
\newtheorem{definition}[theorem]{Definition}





\newcommand{\RP}{{\mathbb R\mathbb P}}

\newcommand\CP {{\mathbb C\mathbb P}}

\newcommand{\HH}{{\mathbb H}}

\DeclareMathOperator{\sys}{{\rm sys}}

\DeclareMathOperator{\dR}{{\rm dR}}

\DeclareMathOperator{\Z}{\mathbb{Z}}

\DeclareMathOperator{\R}{{\mathbb R}}

%

\newcommand\C {{\mathbb C}}

\numberwithin{equation}{section}

\numberwithin{figure}{section} \numberwithin{table}{section}

\newcommand\stsys{{\rm stsys}}

\DeclareMathOperator{\vol}{{\rm vol}} 

\DeclareMathOperator{\area}{{\rm area}}

\def \cqfd{\unskip\kern 6pt\penalty 500
\raise -2pt\hbox{\vrule\vbox to10pt{\hrule width 4pt
\vfill\hrule}\vrule}\par}                 

\def\adots{\mathinner{\mkern2mu\raise1pt\hbox{.}
\mkern3mu\raise4pt\hbox{.}\mkern1mu\raise7pt\hbox{.}}}
\def\hfl#1{{\buildrel{#1}\over
{\hbox to 12mm{\rightarrowfill}}}}

\def \\R^n \times \R^n
\rightarrow \R{\mathop{\R^n \times \R^n
\rightarrow \R}}

\long\def\forget#1\forgotten{}

\begin{document}

\title{Extending Gromov's optimal systolic inequality}

\author{Thomas G. Goodwillie}\address{Department of Mathematics, Brown
  University} \email{tomg@math.brown.edu}

\author{James J. Hebda} \address{Department of Mathematics and
  Statistics, Saint Louis University, 220 N. Grand Boulevard,
  St. Louis, MO 63103, USA} \email{james.hebda@slu.edu}

\author{Mikhail G. Katz} \address{Department of Mathematics, Bar Ilan
  University, Ramat Gan 52900 Israel} \email{katzmik@math.biu.ac.il}

\begin{abstract}
The existence of nontrivial cup products or Massey products in the
cohomology of a manifold leads to inequalities of systolic type, but
in general such inequalities are not optimal (tight).  Gromov proved
an \emph{optimal} systolic inequality for complex projective space.
We provide a natural extension of Gromov's inequality to manifolds
whose fundamental cohomology class is a cup product of 2-dimensional
classes.
\end{abstract}

\maketitle

\tableofcontents

\section{Introduction}

Whenever the fundamental cohomology class of a manifold~$M$ decomposes
as a nontrivial cup product, there exist associated stable systolic
inequalities providing upper bounds on products of stable systolic
invariants of metrics on~$M$ in terms of the total volume.%
\footnote{Gromov \cite{Gr2}, \cite{Gr99}.}
Furthermore, when cup products are trivial, the existence of suitable
nonzero Massey products similarly leads to inequalities of systolic
type.%
\footnote{Katz and Lescop \cite{Ka05}; Katz \cite{Ka08}.}
Dranishnikov, Katz, and Rudyak introduced a numerical topological
invariant of a manifold~$M$, in terms of the existence of systolic
inequalities on~$M$, called its \emph{systolic category}, an invariant
that turned out to be closely related to its Lusternik--Shnirelmann
category.%
\footnote{See \cite{Ka06}, \cite{Dr08}, \cite{Dr11}.}
In some cases, the systolic category provides a better lower bound for
the Lusternik--Shnirelmann category than the cuplength.

However, most such inequalities are not optimal.  One exception is
Gromov's stable systolic inequality for complex projective~$n$-space,
where all Riemannian metrics satisfy~$\frac{\stsys_2^n}{\vol}\leq n!$.
Here the boundary case of equality is satisfied by the Fubini--Study
metric.  We provide a natural extension of Gromov's inequality to
$2$-essential manifolds (Theorem~\ref{c23} below).

There is only a handful of optimal systolic inequalities.  Loewner
proved around 1950 that all metrics on the~$2$-torus satisfy the bound
$\frac{\sys_1^2}{\area}\leq \sqrt{\tfrac43}$, where the boundary case
of equality occurs for the flat torus~$\C/L$ where~$L$ is the
hexagonal lattice (lattice of Eisenstein integers).  Pu \cite{Pu52}
proved in 1952 that all metrics on the real projective plane satisfy
the bound~$\frac{\sys_1^2}{\area}\leq \frac\pi2$, with equality
precisely for metrics of constant Gaussian curvature.  Bavard
\cite{Ba86} proved an optimal systolic inequality for the Klein
bottle, and described the (singular) metric realizing the boundary
case of equality.  Katz and Sabourau \cite{Ka15} proved an optimal
systolic inequality for metrics of nonpositive Gaussian curvature on
Dyck's surface~$\RP^2\#\RP^2\#\RP^2$.  Jabbour and Sabourau
\cite{Ja22} proved an optimal systolic inequality for closed geodesics
in complete metrics on the punctured sphere (with $3$ or $4$
punctures).  Additionally, Loewner's torus inequality extends to all
hyperelliptic surfaces~\cite{Ka06b}.  Recent advances in systolic
geometry include \cite{Ka24a} and \cite{Ka24b}.

\noindent
\begin{table}
\renewcommand{\arraystretch}{1.5}
\def\drawing {
\begin{tabular}[t]  
{|
@{\hspace{3pt}}p{0.5in}|| 
@{\hspace{3pt}}p{0.6in}|
@{\hspace{3pt}}p{0.28in}|
@{\hspace{3pt}}p{0.55in}|
@{\hspace{3pt}}p{0.7in}|
@{\hspace{3pt}}p{0.7in}|
@{\hspace{3pt}}p{0.72in}|
} 
\hline  & Loewner   & Pu & Gromov & Bavard & Katz Sabourau & Jabbour Sabourau
\\ 
\hline\hline
surface &
$T^2$  &
$\RP^2$ &
$\CP^n$ &
$\RP^2\#\RP^2$ &
$3\RP^2$ &
$S^2 - 3,4$
\\
\hline 
bound &
$\frac2{\sqrt3}$ &
$\frac\pi2$ & 
$n!$  \mbox{(stable)} 
& 
$\frac{\pi}{2\sqrt2}$ 
&
$\frac13(\sqrt2+1)$
$\phantom{a}(K\leq0)$
& 
$2{\sqrt3}$,
$\frac4{\sqrt3}$ (geodesics)
\\
\hline
\end{tabular}
\renewcommand{\arraystretch}{1}
}
\drawing
\caption{\textsf{Optimal systolic
    inequalities${}^{\phantom{I}}_{\phantom{I}}$}}
\label{tt}
\end{table}

\section{Lattices in dimension 2}

Proofs of stable systolic inequalities typically depend on analysis of
lattices in homology and cohomology groups, with respect to suitable
norms (see Section~\ref{s1014}).  Let~$L$ be a lattice in a Banach
space~$(B,\|\;\|)$ of dimension~$b$.  Let~$1\leq k \leq b$.
The~\emph{$k$-th successive minimum} of~$L$,
denoted~$\lambda_k(L)=\lambda_k(L,\|\;\|)$, is the least
number~$\lambda$ such that there exists a linearly
independent~$k$-tuple~$(x_1,\ldots,x_k)$ of elements in~$L$
satisfying~$\|x_i\|\leq \lambda$ for all~$i=1,\ldots,k$.  

\begin{definition}
\label{d10141}
For~$b>0$, let~$\Gamma_b>0$ be the supremum
of~$\lambda_1(L)\,\lambda_b(L^\ast)$ over all lattices~$L$ in
all~$b$-dimensional Banach spaces, where~$L^\ast$ is the dual lattice
in the dual Banach space.
\end{definition}

Mahler's compactness theorem implies that~$\Gamma_b$ is finite.  

Let~$\Gamma_b^e$ be the supremum of the
product~$\lambda_1(L)\,\lambda_b(L^\ast)$ over all lattices
in~$b$-dimensional Euclidean space.

\begin{proposition}
\label{p21}
We have~$\Gamma_b\le\sqrt b\;\Gamma_b^e$.
\end{proposition}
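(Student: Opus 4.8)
The plan is to deduce the Banach estimate from the Euclidean one by approximating an arbitrary norm on a $b$-dimensional space by a Euclidean norm, and then tracking how this approximation distorts the two successive minima appearing in the product $\lambda_1(L)\,\lambda_b(L^\ast)$.

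First I would fix an arbitrary $b$-dimensional Banach space $(B,\|\cdot\|)$ together with a lattice $L\subset B$, and apply John's theorem to the unit ball of $\|\cdot\|$, which is a symmetric convex body. Taking the Euclidean norm $\|\cdot\|_e$ whose unit ball is the inscribed ellipsoid of maximal volume, John's theorem provides the two-sided comparison
\[
\|x\|\le\|x\|_e\le\sqrt{b}\,\|x\|\qquad(x\in B),
\]
where the factor $\sqrt{b}$ (rather than $b$) is precisely what the central symmetry of the unit ball contributes.

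Next I would compare successive minima on the two sides. The inequality $\|x\|\le\|x\|_e$ shows that any linearly independent family realizing a successive minimum for $\|\cdot\|_e$ has no larger $\|\cdot\|$-norms, so $\lambda_1(L,\|\cdot\|)\le\lambda_1(L,\|\cdot\|_e)$. For the dual factor, the essential observation is that the dual lattice $L^\ast=\{\xi\in B^\ast:\xi(x)\in\Z\text{ for all }x\in L\}$ is defined purely algebraically; it is the same subset of $B^\ast$ no matter which norm we place on $B$, and only the norm on $B^\ast$ changes. Dualizing the right-hand John inequality, and recalling that passing to dual norms reverses inequalities, gives $\|\xi\|^\ast\le\sqrt{b}\,\|\xi\|_e^\ast$ on $B^\ast$, and hence $\lambda_b(L^\ast,\|\cdot\|^\ast)\le\sqrt{b}\,\lambda_b(L^\ast,\|\cdot\|_e^\ast)$.

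Finally I would multiply the two estimates. Since the dual of a Euclidean norm is again Euclidean, $(B^\ast,\|\cdot\|_e^\ast)$ is a $b$-dimensional Euclidean space and $L^\ast$ is a lattice in it, so the Euclidean product $\lambda_1(L,\|\cdot\|_e)\,\lambda_b(L^\ast,\|\cdot\|_e^\ast)$ is bounded above by $\Gamma_b^e$ by definition. Combining,
\[
\lambda_1(L,\|\cdot\|)\,\lambda_b(L^\ast,\|\cdot\|^\ast)
\le\sqrt{b}\;\lambda_1(L,\|\cdot\|_e)\,\lambda_b(L^\ast,\|\cdot\|_e^\ast)
\le\sqrt{b}\,\Gamma_b^e,
\]
and taking the supremum over all lattices in all $b$-dimensional Banach spaces yields $\Gamma_b\le\sqrt b\,\Gamma_b^e$. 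The step I expect to require the most care is the bookkeeping of the two duality reversals: one must use the inner John containment for the primal minimum $\lambda_1$, where it costs nothing, and the outer containment for the dual minimum $\lambda_b(L^\ast)$, where it contributes the single factor $\sqrt b$, so that the total distortion of the product is $\sqrt b$ and not $b$.
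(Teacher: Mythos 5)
Your proof is correct and follows essentially the same route as the paper's: sandwich the given norm between a Euclidean norm and its $\sqrt b$-dilate via John's ellipsoid theorem, pass the cost-free containment to $\lambda_1(L)$, and charge the single factor $\sqrt b$ to $\lambda_b(L^\ast)$ after dualizing. If anything, your bookkeeping is the more careful one: the two implications as displayed in the paper's proof, namely $\lambda_1(L,|\cdot|)\le\lambda_1(L,\|\cdot\|)$ and $\lambda_b(L^\ast,|\cdot|^\ast)\le\sqrt b\,\lambda_b(L^\ast,\|\cdot\|^\ast)$, actually bound the \emph{Euclidean} product by $\sqrt b$ times the \emph{Banach} product rather than the reverse, whereas your inequalities $\lambda_1(L,\|\cdot\|)\le\lambda_1(L,\|\cdot\|_e)$ and $\lambda_b(L^\ast,\|\cdot\|^\ast)\le\sqrt b\,\lambda_b(L^\ast,\|\cdot\|_e^\ast)$ run in the direction needed to conclude.
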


\begin{proof}
Let~$(B,\|\;\|)$ be a~$b$-dimensional Banach space.  By John's
ellipsoid theorem,%
\footnote{John \cite[p.\;203]{Jo48}; Milman and Schechtman
  \cite[Section 3.3, p.\,10]{Mi86}.}
there exists a Euclidean norm~$| \;|$ on~$B$ such that
$|\;|\le\|\;\|\le\sqrt b \, |\;|$.  The first of these two
inequalities implies~$\lambda_1(L,|\; |)\le \lambda_1(L,\|\;\|)$.  The
second implies~$|\;|^\ast \le \sqrt b\, \|\;\|^\ast$ and therefore
$\lambda_b(L^\ast,|\;|)\le \sqrt b\,\lambda_b(L^\ast,\|\;\|^\ast)$.
Together these
imply~$\lambda_1(L)\,\lambda_b(L^\ast)\le\sqrt{b}\;\Gamma_b^e$.
\end{proof}

Of course~$\Gamma_1=1=\Gamma_1^e$.  

\begin{proposition}
[Speyer]
\label{p24}
For~$2$-dimensional lattices, the maximal value of
$\lambda_1(L)\,\lambda_2(L^\ast)$ is~$\frac32$.%
\footnote{Speyer \url{https://mathoverflow.net/a/449498/28128} }
\end{proposition}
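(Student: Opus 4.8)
The plan is to prove the two halves separately: the bound $\lambda_1(L)\lambda_2(L^\ast)\le\tfrac32$ for every $2$-dimensional normed lattice, and an explicit configuration attaining it. First I would put the problem in normal form. Since $\lambda_1(L)\lambda_2(L^\ast)$ is unchanged if we rescale the lattice or the norm, I may assume $\lambda_1(L)=1$. An integral basis identifies $L$ with $\mathbb{Z}^2$ and, under the standard pairing, $L^\ast$ with $\mathbb{Z}^2$, the ambient norm being arbitrary with symmetric convex unit ball $K$. Then $\lambda_1(L)=1$ says precisely that $K$ has no nonzero lattice point in its interior while carrying one on $\partial K$. Writing $h_K(w)=\max_{x\in K}\langle w,x\rangle=\|w\|^\ast$ for the support function, the target $\lambda_2(L^\ast)\le\tfrac32$ becomes: there are independent $w_1,w_2\in\mathbb{Z}^2$ with $h_K(w_i)\le\tfrac32$, that is, $K\subseteq\tfrac32\,\Pi$ for the fundamental parallelogram $\Pi=\{x:|\langle w_1,x\rangle|\le1,\ |\langle w_2,x\rangle|\le1\}$ of a dual basis. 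So the inequality is the covering statement that every lattice-point-free origin-symmetric convex body lies inside $\tfrac32$ times some lattice fundamental parallelogram.

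For the upper bound the core computation is as follows. Pick a shortest primal vector and use $GL_2(\mathbb{Z})$ to make it $e_1=(1,0)$, so $(1,0)\in\partial K$, and set $H=h_K(0,1)=\max_{x\in K}x_2$. The function $f(k)=h_K(1,k)$ is convex in $k$ and $H$-Lipschitz, since its slope is the second coordinate of the corresponding support point and $|x_2|\le H$ on $K$; moreover $f(k)\ge\langle(1,k),(1,0)\rangle=1$, with equality in the direction normal to $K$ at $(1,0)$, so $\min_{t\in\mathbb{R}}f(t)=1$. The nearest integer to the real minimiser is at distance $\le\tfrac12$, whence $\min_{k\in\mathbb{Z}}f(k)\le 1+\tfrac12H$. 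Using the independent pair $(0,1),(1,k)$ gives $\lambda_2(L^\ast)\le\max\!\big(H,\ 1+\tfrac12H\big)$, which is $\le\tfrac32$ as soon as $H\le1$. The hard part is exactly the hypothesis $H\le1$: for badly chosen frames (e.g.\ long parallelograms) $(0,1)$ is far too long and $H$ can be large. The key lemma I would prove is that one can always select a shortest primal vector $e_1$ for which the primitive covector $w\perp e_1$ satisfies $h_K(w)\le1$ — intuitively, a body tall in a direction perpendicular to $e_1$ is forced by convexity and the no-interior-lattice-point condition to put another lattice point at ``height one'' that is no longer than $e_1$, yielding a competing shortest vector with smaller $H$. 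Alternatively, and more robustly, I would invoke Mahler's compactness theorem to know the supremum is attained and then use an extreme-point argument to reduce to the case where $K$ is a hexagon (parallelograms give only the value $1$); on hexagons the one-parameter optimisation behind $\min_k f(k)\le\tfrac12(\text{width})+\tfrac12(\text{height})\le\tfrac12\cdot2+\tfrac12\cdot1=\tfrac32$ becomes rigorous, the width being $<2$ by convexity and the height $<1$ from the absence of interior lattice points.

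For sharpness I would exhibit the hexagon $K_0$ with vertices $(\pm1,0),\ (\tfrac32,1),\ (-\tfrac12,1),\ (-\tfrac32,-1),\ (\tfrac12,-1)$ together with $L=\mathbb{Z}^2$. Its horizontal slices at heights $0$ and $\pm1$ meet the lattice only in boundary points, so the origin is the sole interior lattice point and $\lambda_1(L)=1$. On the dual side $h_{K_0}(0,1)=1$, while every integer covector independent of $(0,1)$ has $h_{K_0}\ge\tfrac32$ (with equality for $(1,0)$ and $(1,-1)$, since $h_{K_0}(1,k)=\max(1,|k+\tfrac32|,|k-\tfrac12|)$ is minimised over $\mathbb{Z}$ at the value $\tfrac32$). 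Hence $\lambda_2(K_0^\ast)=\tfrac32$ and $\lambda_1(L)\lambda_2(L^\ast)=\tfrac32$; note $K_0$ has area exactly $4$, so it is a critical body. Combined with the upper bound this shows the maximum equals $\tfrac32$. The main obstacle throughout is the uniform upper bound — equivalently the key lemma reducing every $K$ to the case $H\le1$, or the reduction to an extremal hexagon — whereas the normal-form reductions and the verification of the extremal example are routine.
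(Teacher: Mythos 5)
The paper offers no proof of Proposition~\ref{p24}; it is quoted from Speyer's MathOverflow answer. So your attempt can only be judged on its own terms, and it has a genuine gap in the upper bound, which is the substantive half of the statement. Your reduction to: ``find a shortest vector $e_1=(1,0)$ with $H:=\max_{x\in K}x_2\le 1$, then $\lambda_2(L^\ast)\le\max(H,\,1+\tfrac12 H)\le\tfrac32$'' is correctly set up (the convexity, the Lipschitz bound, and $\min_t h_K(1,t)=1$ are all fine), but the ``key lemma'' you defer --- that some shortest vector has $H\le1$ --- is not only unproven, it is \emph{false}. Take $K=\mathrm{conv}\{\pm(1,0),\pm(\tfrac12,\tfrac{11}{10})\}$ with $L=\Z^2$: the chord at height $1$ is the short segment from $(\tfrac4{11},1)$ to $(\tfrac6{11},1)$, so $K$ has no nonzero interior lattice point, $\pm(1,0)$ are the \emph{only} shortest vectors, and yet $H=\tfrac{11}{10}>1$. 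Your heuristic (``a tall body is forced to put another lattice point at height one'') fails exactly because the height-one chord can be shorter than $1$ and slip between integers. For this $K$ your bound gives $\max(1.1,\,1.55)=1.55>\tfrac32$, so the argument as structured cannot yield the sharp constant; one has to use a pair of covectors not containing the one perpendicular to the chosen shortest vector (here $(1,0)$ and $(1,-1)$ both have dual norm $1$), and organizing that case analysis is precisely the missing content.

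The fallback route is also not sound as sketched. Your claim that ``parallelograms give only the value $1$'' is contradicted by your own extremal body: the six listed points are not in strictly convex position ($(-1,0)$ is the midpoint of $(-\tfrac12,1)$ and $(-\tfrac32,-1)$), so $K_0$ \emph{is} a parallelogram, with vertices $\pm(\tfrac32,1)$ and $\pm(\tfrac12,-1)$, realizing the value $\tfrac32$. Likewise ``height $<1$ from the absence of interior lattice points'' is false for hexagons (perturb the counterexample above to $\mathrm{conv}\{\pm(1,0),\pm(\tfrac25,\tfrac{11}{10}),\pm(\tfrac35,\tfrac{11}{10})\}$). What \emph{is} correct and complete in your writeup is the verification that $K_0$ attains $\lambda_1(L)\lambda_2(L^\ast)=\tfrac32$: the interior-lattice-point check, $h_{K_0}(0,1)=1$, and $\min_{k\in\Z}h_{K_0}(1,k)=\tfrac32$ all check out. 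So the lower bound $\Gamma_2\ge\tfrac32$ is established; the inequality $\Gamma_2\le\tfrac32$ is not.
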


It is easy to show that $\Gamma_2^e=\frac2{\sqrt3}$.  Incidentally,
Mahler \cite{Ma48} (as reported in \cite[p.\;581]{Ka88}) showed
that~$\lambda_1(L)\, \lambda_1(L^\ast)\leq\sqrt2$.

It is known that~$\Gamma_b^e$ grows linearly in~$b$.%
\footnote{Banaszczyk \cite{Ba93}.}
Therefore by Proposition~\ref{p21},~$\Gamma_b=O\left(b^{1.5}\right)$.
Banaszczyk obtains a better bound~$O\left(b\log b\right)$.%
\footnote{Banaszczyk \cite{Ba95}.}

\section{Stable systolic inequality extending Gromov's}
\label{s1014}

Let~$M$ be a closed manifold.  Let~$L_k(M)$ be the image
of~$H_k(M;\Z)$ in~$H_k(M;\R)$.  Given a Riemannian metric on~$M$, the
stable norm~$\|C\|$ of a class~$C\in H_k(M;\R)$ is
~$\|C\|=\inf\big\{\vol_k(\tilde C) \colon \tilde C \in C \big\}$,
where the infimum is taken over all smooth real cycles.  The
stable~$k$-systole of~$M$ is by definition
\[
\stsys_k(M)=\lambda_1(L_k(M), \|\;\|).%
\footnote{Without the stabilisation (i.e., allowing denominators in
  cycles), one witnesses a widespread phenomenon of systolic freedom;
  see e.g., Babenko and Katz \cite{Ba98}.}
\]

Let~$L^k(M)\subseteq H^k_{\dR}(M)$ be the image of integral cohomology
in de Rham cohomology, the dual lattice of~$L_k(M)$.  The norm on
$H^k_{\dR}(M)$ that is dual to the stable norm on homology is the
comass norm.%
\footnote{Whitney \cite{Wh57}; Federer \cite[Section\;4.10,
    p.\;380]{Fe2}; Gromov \cite[Section 4.34, p.\;261]{Gr99}; Pansu
  \cite[Lemma\;17]{Pa99}.}
We recall the definition.  For a differential~$k$-form~$a$ at a point
$p$, the comass~$\|a_p\|$ is the maximum of~$a(v_1,...,v_k)$ over all
vectors~$v_i\in T_p M$ with~$|v_i|=1$.  The comass of~$a\in \Omega^k
(M)$ is~$\|a\|_\infty =\sup \left\{ \|a_p\| \colon p\in M\right\}$.
For~$\alpha\in H^k_{\dR}(M)$ the comass is
\[
\|\alpha\|^\ast = \inf_{a\in \alpha} \|a\|_\infty.
\]

\begin{lemma}
\label{l31}
The comass of a wedge of~$n$~$2$-forms of unit comass is at most~$n!$.
\end{lemma}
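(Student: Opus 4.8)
The plan is to prove this pointwise statement by induction on $n$, the key device being the skew-normal form of a single factor, which lets me trade one $2$-form for a reduction of the dimension by two. Since comass is computed pointwise, I would fix a point and work on the Euclidean space $V=T_pM$, regarding each $\omega_i$ as an alternating $2$-form with $\|\omega_i\|\le 1$ (I write $\|\cdot\|$ for comass throughout). First I would reduce to the top-dimensional case: evaluating $\omega_1\wedge\cdots\wedge\omega_n$ on a $2n$-tuple of unit vectors gives $0$ unless they are linearly independent, so the comass equals the supremum, over $2n$-dimensional subspaces $W$, of $|c_W|$, where $\omega_1\wedge\cdots\wedge\omega_n$ restricts on $W$ to $c_W$ times the unit volume form; here Hadamard's inequality shows that the value on any unit frame of $W$ is dominated by $|c_W|$. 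Since restriction to $W$ does not increase comass, it suffices to prove: if $\dim V=2n$ and each $\|\omega_i\|\le 1$, then $|c|\le n!$, where $\omega_1\wedge\cdots\wedge\omega_n=c\,\vol$.

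For the inductive step I would invoke the standard normal form for a skew-symmetric form: there is an orthonormal basis $f_1,\dots,f_{2n}$ in which $\omega_n=\sum_{j=1}^n b_j\, f_{2j-1}^\ast\wedge f_{2j}^\ast$ with $0\le b_j\le\|\omega_n\|\le 1$ (the comass of a $2$-form equals its largest ``singular value'', the maximum of the $b_j$). Writing $\beta=\omega_1\wedge\cdots\wedge\omega_{n-1}$ and expanding $\beta\wedge\omega_n=\sum_j b_j\,\beta\wedge f_{2j-1}^\ast\wedge f_{2j}^\ast$, a Laplace-type expansion identifies the top coefficient of each summand, up to sign, with the evaluation of $\beta$ on the orthonormal frame of the codimension-$2$ subspace $W_j=\operatorname{span}\{f_k:k\ne 2j-1,2j\}$. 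Each $\omega_i|_{W_j}$ still has comass $\le 1$, so by the inductive hypothesis (applied to the $n-1$ forms $\omega_1,\dots,\omega_{n-1}$ on the $(2n-2)$-dimensional $W_j$) this evaluation is at most $(n-1)!$ in absolute value. Hence $|c|\le(n-1)!\sum_{j=1}^n b_j\le n\cdot(n-1)!=n!$, the base case $n=1$ being just $\|\omega_1\|\le 1$.

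The main obstacle to avoid is the temptation to bound the wedge termwise: the shuffle formula for $\omega_1\wedge\cdots\wedge\omega_n$ has $(2n)!/2^n$ monomials, each of modulus at most $1$ under the comass hypothesis, which yields the useless estimate $(2n)!/2^n\gg n!$. The entire point is that massive cancellation occurs, and the induction captures it precisely by using the skew-normal form of one factor to collapse two dimensions at a time. The only facts needed beyond the definitions already given are the spectral normal form for skew-symmetric forms (equivalently, that the comass of a $2$-form is its top singular value) and Hadamard's inequality; the sharp constant arises from the telescoping $\prod_{k=1}^n k=n!$, and equality propagates exactly in the K\"ahler case $b_j\equiv 1$, consistent with the Fubini--Study extremal metric behind Gromov's inequality.
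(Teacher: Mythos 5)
Your proof is correct, and it shares the paper's inductive skeleton---peel off one $2$-form, diagonalize it into $n$ blocks each of size at most $1$, and pick up a factor of $n$ at each stage, telescoping to $n!$---but the mechanism for bounding each block is genuinely different. The paper isolates the multiplicative estimate $C_{2,2n-2}\le n$, i.e.\ $|a\wedge b|\le n\,|a|\,|b|$ for a $2$-form $a$ and an \emph{arbitrary} $(2n-2)$-form $b$, and proves it by passing through the Hodge star to reduce to the inner-product inequality $a\cdot b\le n\,|a|\,|b|$ for two $2$-forms on $\R^{2n}$; the induction then only needs the comass of $\omega_1\wedge\cdots\wedge\omega_{n-1}$ on the full space. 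You never need that general statement: after putting $\omega_n$ in skew-normal form you perform a Laplace expansion, identify the $j$-th term with $b_j$ times the evaluation of $\omega_1\wedge\cdots\wedge\omega_{n-1}$ on the complementary coordinate $(2n-2)$-plane $W_j$, and apply the inductive hypothesis to the \emph{restrictions} $\omega_i|_{W_j}$, which still have comass at most $1$. This buys a more self-contained argument (no Hodge star, no inner products of forms, and a transparent equality case $b_j\equiv 1$ matching the K\"ahler extremal), at the cost of not producing the paper's more general intermediate fact $C_{2,2n-2}\le n$, which applies to $(2n-2)$-forms that are not products of unit-comass $2$-forms. Both routes ultimately rest on the same two ingredients: the spectral normal form for skew-symmetric $2$-forms (comass equals the top singular value) and the fact that a top-degree form's comass is the absolute value of its coefficient against the unit volume form, which you justify cleanly via Hadamard's inequality.
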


\begin{proof}
It suffices to prove a pointwise statement.  To simplify notation, we
will use single bars $|\;|$ for the comass in this proof.  Define the
norm~$|\omega|$, for a~$p$-form on~$\mathbb R^m$, to be the least
upper bound of~$|\omega(v_1,\dots ,v_p)|$ for vectors~$v_i$ of length
one. For any~$p$ and~$q$, let~$C_{p,q}$ be the smallest number such
that for every~$p$-form~$a$ and every~$q$-form~$b$ on~$\mathbb R^m$ we
have~$|a\wedge b|\le C\,|a|\,|b|$.  It does not depend on~$m$, because
if~$C$ is valid for forms in~$\mathbb R^{p+q}$ and
if~$v_1,\dots,v_{p+q}$ span~$V\cong \mathbb R^{p+q}$ in~$\mathbb R^m$
then
$$ |(a\wedge b)(v_1,\ldots ,v_{p+q})|\le
C\,|a_V|\,|b_V|\,|v_1|\cdots |v_{p+q}|\le
C\,|a|\,|b|\,|v_1|\cdots |v_{p+q}|,$$ where~$a_V$
and~$b_V$ are the restrictions to~$V$.

We will show that~$C_{2,2n-2}\le n$. It follows that for~$2$-forms
$\omega_1,\dots,\omega_n$ we have~$|\omega_1\wedge \dots \wedge
\omega_n|\le n\,|\omega_1\wedge \dots \wedge
\omega_{n-1}|\,|\omega_n|$, and therefore by induction on~$n$ we
have~$|\omega_1\wedge\cdots\wedge\omega_n|\le n!\,|\omega_1|\cdots
|\omega_n|$.

To prove that~$C_{2,2n-2}\le n$, first observe that~$C_{p,q}$ has the
following alternative interpretation: it is the smallest~$C$ such that
the inner product of two~$p$-forms on~$\mathbb R^{p+q}$ always
satisfies~$a\cdot b\le C\,|a|\,|b|$. This is so because the norm
satisfies~$|b|=|\ast b|$ (Hodge star) and~$|a\wedge \ast b|=|a\cdot
b|$.

So a restatement of the claim to be proved is that for~$2$-forms
on~$\mathbb R^{2n}$ we have~$a\cdot b\le n\,|a|\,|b|$. This can be
seen by making a change of orthogonal basis so that
$$a = a_1e_1\wedge e_2+\dots+ a_ne_{2n-1}\wedge e_{2n}$$ 
for some~$a_1,\dots ,a_n$. Write 
$$b=b_1e_1\wedge e_2+\dots+ b_ne_{2n-1}\wedge e_{2n}+\dots ,$$ where
the other terms will not matter. Then
$$|a|=\max|a_i|$$
$$|b|\ge \max |b_i|$$
\[
a\cdot b=\sum_i a_ib_i\le n\,\max|a_ib_i|\le n\,|a|\,|b|.
\]

To explain why this norm is invariant under Hodge star, suppose
that~$b$ is a~$p$-form in~$\mathbb R^{p+q}$.  The claim is
that~$|(\ast b)(v_1,\dots,v_q)|\le |b|\,|v_1|\dots|v_q|$. Without loss
of generality the vectors~$v_i$ are orthogonal and of length
one. Complete them to an orthonormal basis by
vectors~$w_1,\dots,w_p$. Then~$|(\ast b)(v_1,\dots,v_q)|= |b(w_1,\dots
,w_p)|=|b|$, as required.%
\footnote{See also Goodwillie
  \url{https://mathoverflow.net/a/449042/28128} The bound also results
  by representing the cup product by a suitable Pfaffian, and applying
  Roos \cite[Lemma~2.1, p.\,1788]{Ro15} (see there for some history;
  the result goes back to Banach).}
\end{proof}

In Gromov's 1981 book,%
\footnote{Gromov \cite[item 4.37, p.\;60]{Gr81}.}
one finds the following comment in the paragraph discussing
Wirtinger's inequality and the optimal stable systolic inequality
for~$\CP^n$: ``$(2n)!/2^n$ {\ldots} est la meilleure constante pour la
comasse d'un produit de~$n$~$2$-formes quelconques.''  This was
translated as follows in the English edition:%
\footnote{Gromov \cite[item 4.37, p.\;262]{Gr99}.}
\begin{quote}
$(2n)!/2^n$ {\ldots} is the best constant for the comass of the
  product of~$n$ arbitrary~$2$-forms.
\end{quote}
Note that~$(2n)!/2^n$ is considerably larger than~$n!$.

\begin{definition}
A closed orientable manifold is~$2$-\emph{essential} if its
fundamental cohomology class is expressible as a cup product
of~$2$-dimensonal classes.
\end{definition}

\begin{theorem}
\label{c23}
All metrics on a~$2$-essential manifold~$M$ of dimension~$2n$ satisfy
the bound
\[
\frac{\stsys_2^n}{\vol}\leq n!\,(\Gamma_b)^n,
\]
where~$b=b_2(M)$ is the second Betti number.
\end{theorem}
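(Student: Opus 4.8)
The plan is to run the standard cup-product argument for stable systolic inequalities, but to feed it a decomposition of the fundamental class into \emph{short} lattice classes rather than the arbitrary classes furnished by the definition of $2$-essentiality; the price is an integer factor that can only help. Throughout, write $\mu\in H^{2n}_{\dR}(M)$ for the fundamental cohomology class and $b=b_2(M)$. I first record the normalisation $\|\mu\|^\ast = 1/\vol(M)$: any top-degree representative $\eta=f\,d\vol$ of $\mu$ satisfies $1=\int_M\eta\le(\sup_M|f|)\,\vol(M)$, so $\|\eta\|_\infty\ge 1/\vol(M)$, with equality attained by $\tfrac{1}{\vol(M)}d\vol$.

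Next consider the symmetric $n$-linear form $P\colon\big(H^2_{\dR}(M)\big)^n\to\R$ defined by $P(x_1,\dots,x_n)=\langle x_1\cup\cdots\cup x_n,[M]\rangle$. The hypothesis that $M$ is $2$-essential says precisely that $P\not\equiv0$. Using the $b$-th successive minimum, choose integral classes $\beta_1,\dots,\beta_b\in L^2(M)$ that are linearly independent and satisfy $\|\beta_j\|^\ast\le\lambda_b(L^2(M))$; since $\dim_\R H^2_{\dR}(M)=b$, they form an $\R$-basis. Expanding the nonzero value of $P$ in this basis by multilinearity, there are indices $j_1,\dots,j_n\in\{1,\dots,b\}$ (possibly with repetition) for which $P(\beta_{j_1},\dots,\beta_{j_n})\ne0$. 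Because the $\beta_j$ are integral, the cup product $\beta_{j_1}\cup\cdots\cup\beta_{j_n}$ is an integral class, hence equals $m\mu$ for the integer $m=P(\beta_{j_1},\dots,\beta_{j_n})$, and $m\ne0$ forces $|m|\ge1$.

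Now assemble the estimate. Representing each $\beta_{j_k}$ by a closed $2$-form of comass arbitrarily close to $\|\beta_{j_k}\|^\ast$ and applying the pointwise wedge bound of Lemma~\ref{l31} to the product form (which represents the cup product), I obtain
\[
\frac{1}{\vol(M)}\le\frac{|m|}{\vol(M)}=\|m\mu\|^\ast=\|\beta_{j_1}\cup\cdots\cup\beta_{j_n}\|^\ast\le n!\prod_{k=1}^n\|\beta_{j_k}\|^\ast\le n!\,\big(\lambda_b(L^2(M))\big)^n.
\]
Multiplying by $\stsys_2(M)^n=\lambda_1(L_2(M))^n$ and recalling that $L^2(M)=L_2(M)^\ast$ carries the comass norm dual to the stable norm, the definition of $\Gamma_b$ (Definition~\ref{d10141}) gives $\lambda_1(L_2(M))\,\lambda_b(L^2(M))\le\Gamma_b$, whence
\[
\frac{\stsys_2^n}{\vol(M)}\le n!\,\big(\lambda_1(L_2(M))\,\lambda_b(L^2(M))\big)^n\le n!\,(\Gamma_b)^n,
\]
as desired. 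The one genuinely new step---and the main obstacle---is the passage from an arbitrary cup-product decomposition of $\mu$ to one using the short basis $\{\beta_j\}$: the point is that multilinearity of $P$ detects a nonzero product of basis classes, and the resulting integer $m$ appears on the favourable side of the inequality, so that no control over $m$ beyond $|m|\ge1$ is needed.
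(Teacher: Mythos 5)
Your proposal is correct and follows essentially the same route as the paper's proof: a basis of $L^2(M)$ realizing the $b$-th successive minimum of the comass norm, multilinearity of the cup product to extract $n$ basis classes whose product is a nonzero integer multiple of the fundamental class, Lemma~\ref{l31} for the comass of the wedge, and the definition of $\Gamma_b$ applied to the dual pair of lattices. The only differences are presentational (you phrase the integrality step via the form $P$ and the final bound via $\|\mu\|^\ast=1/\vol(M)$ rather than via the integral $\int_M a_1\wedge\cdots\wedge a_n$ directly), so there is nothing to add.
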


\begin{proof}
In~$L^2_{\dR}(M)$, choose a linearly independent spanning set such
that the comass norm of each element is at
most~$\lambda_{b}\left(L^{2\phantom{I}}_{\dR}(M),\|\;\|^\ast\right)$.
Since~$M$ is~$2$-essential, by linearity there exist~$\alpha_j$ in
this spanning set such that~$\alpha_1\cup\,\cdots\,\cup\alpha_n$ is a
non-zero (integer) multiple of the fundamental cohomology class.

Let~$a_j$ be~$2$-forms representing the integer classes~$\alpha_j$.
Recall that the comass norm of~$\alpha_j$ is the infimum of comass
norms of representative~$2$-forms~$a_j$.  By Lemma~\ref{l31}, we
obtain
\begin{equation}
\label{e31}
1\leq\Big|\int_M a_1\wedge\cdots\wedge a_n\Big| \leq
n!\,\|\alpha_1\|^\ast \cdots \|\alpha_n\|^\ast\vol(M).
\end{equation}
We now multiply both sides of \eqref{e31} by the~$n$-th power of
$\stsys_2=\lambda_1(L_2(M),\|\;\|)$ to obtain
\[
\begin{aligned}
(\stsys_2)^n &\leq
  n!\,\lambda_1(L_2(M),\|\;\|)^n\left\|\alpha_1\right\|^\ast\cdots
  \|\alpha_n\|^\ast\vol(M) \\&\leq
  n!\left[\lambda_1(L_2(M),\|\;\|)\;\lambda_{b}\!
    \left(L^2_{\dR}(M),\|\;\|^\ast\right) \right]^n\vol(M) \\&\leq
  n!\,(\Gamma_{b})^n\vol(M),
\end{aligned}
\]
as required.
\end{proof}

When~$b_2(M)=1$ we have~$\Gamma_{b}=1$ and the inequality
specializes to Gromov's stable systolic inequality for the complex
projective space.

The following is immediate from Proposition~\ref{p24}.

\begin{corollary}
All metrics on a~$2$-essential~$(2n)$-manifold~$M$ with
$b_2(M)=2$ satisfy the bound
\[
\frac{\stsys_2^n}{\vol}\leq n!\,\big(\tfrac32\big)^n.
\]
\end{corollary}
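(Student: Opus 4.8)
The plan is to specialize Theorem~\ref{c23} to the case at hand and then identify a single lattice constant. Since~$M$ is a~$2$-essential~$(2n)$-manifold with~$b_2(M)=2$, the hypotheses of Theorem~\ref{c23} are satisfied with~$b=2$, and that theorem yields immediately
\[
\frac{\stsys_2^n}{\vol}\leq n!\,(\Gamma_2)^n.
\]
Thus the entire content of the corollary reduces to evaluating the constant~$\Gamma_2$ from Definition~\ref{d10141}.

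Next I would invoke Proposition~\ref{p24}. By Definition~\ref{d10141}, the quantity~$\Gamma_2$ is the supremum of the product~$\lambda_1(L)\,\lambda_2(L^\ast)$ taken over all lattices~$L$ in all~$2$-dimensional Banach spaces. Proposition~\ref{p24} (Speyer) asserts precisely that this supremal value equals~$\tfrac32$, so~$\Gamma_2=\tfrac32$. Substituting this value into the displayed inequality gives the asserted bound~$\frac{\stsys_2^n}{\vol}\leq n!\,\big(\tfrac32\big)^n$, completing the argument.

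I expect no genuine obstacle here: the corollary is a direct instance of the main theorem once the two-dimensional lattice constant is pinned down. All of the substantive work has already been done upstream---the pointwise comass estimate of Lemma~\ref{l31}, the duality between the stable norm on~$H_2$ and the comass norm on~$H^2_{\dR}$, and the sharp lattice-theoretic computation recorded in Proposition~\ref{p24}. The only points to check are that the geometric hypotheses of Theorem~\ref{c23} hold (that~$M$ is~$2$-essential and that~$b=b_2(M)=2$, both given) and that the constant produced by Speyer is indeed the same~$\Gamma_2$ that appears in Theorem~\ref{c23}, which is immediate from the matching definitions.
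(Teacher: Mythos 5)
Your proposal is correct and follows exactly the route the paper takes: the corollary is stated as immediate from Proposition~\ref{p24}, i.e., one substitutes $\Gamma_2=\tfrac32$ into the bound of Theorem~\ref{c23} with $b=2$. No further comment is needed.
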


Obtaining optimal inequalities for higher stable systoles appears to
be difficult.  Thus, for the quaternionic projective plane~$\HH
\mathbb P^2$, the analysis of the constant in the stable systolic
inequality involves an analysis of~$4$-forms on~$\R^8$.  All metrics
on the quaternionic projective plane~$\HH \mathbb P^2$ satisfy the
inequality~$\frac{\stsys_4^2}{\vol}\leq14$, but the optimal constant
is only known to be in the interval~$[6,14]$.%
\footnote{Bangert et al.~\cite[Proposition~1.4]{Ba09}).}
The symmetric metric is not optimal in this case, and has a systolic
ratio of only~$\frac{10}3$.

\section{Acknowledgements}

The authors are grateful to P\'eter Ern\H{o} Frenkel for bringing the
article by Roos \cite{Ro15} to our attention, to Gennadiy Averkov for
bringing the articles \cite{Ma48} and \cite{Ka88} to our attention,
and to Oliver Knill, Emanuel Lazar, and Steve Shnider for reading
several preliminary versions of the text.

\section{Funding}

Mikhail Katz is partially supported by the BSF grant 2020124 and the
ISF grant 743/22.


\end{document}